\documentclass[letter, 10pt, conference]{ieeeconf}
\IEEEoverridecommandlockouts        

\usepackage{etex}
\usepackage[vlined,ruled]{algorithm2e}
\usepackage[dvipsnames]{xcolor}
\usepackage{pgfpages}
\usepackage[cmex10]{amsmath}		
\usepackage{amssymb, mathrsfs}
\usepackage{cite}
\usepackage{url}


\usepackage{dsfont}
\usepackage{siunitx}
\usepackage{verbatim}									
\usepackage{arydshln}
\usepackage{mathtools}									
\usepackage{siunitx}									
\usepackage{mdframed}

\usepackage{graphicx}
\usepackage{epstopdf}

\usepackage[font=small]{caption}
\usepackage{subcaption}
\usepackage[activate={true,nocompatibility},final,tracking=true,kerning=true,spacing=true,factor=1100,stretch=10,shrink=10]{microtype}

\usepackage{array}
\usepackage{multirow}
\usepackage{enumerate}
\usepackage{booktabs}


\graphicspath{{./fig/}}



\usepackage{pgfplots}
\pgfplotsset{width=8cm,compat=newest}
\newcommand*{\damping}{0.006}%
\newcommand*{\freq}{25}%
\pgfmathsetmacro{\freqd}{sqrt(1-(\damping)^2)*\freq}%

\pgfplotsset{
    standard/.style={
    axis x line=middle,
    axis y line=middle,
    enlarge x limits=0.15,
	enlarge y limits=0.15,
	every axis plot post/.style={mark options={fill=black}},
	}
}

\pgfplotsset{%
    ,compat=1.12
    ,every axis x label/.style={at={(current axis.right of origin)},anchor=north west}
    ,every axis y label/.style={at={(current axis.above origin)},anchor=north east}
    }


\usepackage{tikz}
\usetikzlibrary{arrows}
\usetikzlibrary{decorations.pathmorphing}
\usetikzlibrary{decorations.markings}
\usetikzlibrary{snakes}
\usetikzlibrary{matrix}
\usetikzlibrary{calc}
\usetikzlibrary{patterns}
\usetikzlibrary{positioning}
\usetikzlibrary{shapes}
\usetikzlibrary{shapes.symbols}
\usetikzlibrary{shapes.geometric}
\usetikzlibrary{shadows.blur}
\usetikzlibrary{shapes.arrows}
\usetikzlibrary{shapes.multipart}
\usetikzlibrary{shapes.callouts}
\usetikzlibrary{shapes.misc}

\tikzstyle{every node}=[font=\small]
\tikzstyle{every path}=[line width=0.8pt,line cap=round,line join=round]

\usepackage[american,smartlabels]{circuitikz}
\ctikzset{bipoles/thickness=1}
\ctikzset{bipoles/length=0.8cm}
\ctikzset{bipoles/diode/height=.375}
\ctikzset{bipoles/diode/width=.3}
\ctikzset{tripoles/thyristor/height=.8}
\ctikzset{tripoles/thyristor/width=1}
\ctikzset{bipoles/vsourceam/height/.initial=.7}
\ctikzset{bipoles/vsourceam/width/.initial=.7}

\newcommand{\real}{\mathbb{R}}

\newcommand{\setdef}[2]{\{#1 \;|\; #2\}}

\newcommand{\inner}[3]{\ensuremath{\langle #1,#2\rangle}_{#3}}
\DeclareMathOperator*{\minimize}{minimize} 									
\DeclareMathOperator{\Proj}{Proj}
\DeclareMathOperator*{\argmin}{argmin}

\DeclareSymbolFont{bbold}{U}{bbold}{m}{n}
\DeclareSymbolFontAlphabet{\mathbbold}{bbold}

\newcommand{\map}[3]{#1: #2 \rightarrow #3}

\newcommand{\tb}{\color{blue}}

\usepackage{soul}


\newcommand{\define}{\coloneqq}



\newcommand\oprocendsymbol{\hbox{$\square$}}
\newcommand\oprocend{\relax\ifmmode\else\unskip\hfill\fi\oprocendsymbol}


\newtheorem{theorem}{Theorem}[section]

\newtheorem{proposition}[theorem]{Proposition}

\newtheorem{remark}{Remark}[section]

\newenvironment{pfof}[1]{\vspace{1ex}\noindent{\itshape Proof of
    #1:}\hspace{0.5em}} {\hfill\oprocend\vspace{1ex}}









\newif\ifforstudents

\usepackage{enumitem}

\usepackage{cuted}

\newcommand{\T}{\mathsf{T}}

\renewcommand{\tb}{\color{black}}

\binoppenalty=\maxdimen
\relpenalty=\maxdimen


\title{\bf Low-Gain Stability of Projected Integral Control for Input-Constrained Discrete-Time Nonlinear Systems
\thanks{
}
}

\author{John W. Simpson-Porco%
    \thanks{
      J. W. Simpson-Porco is with the Department of Electrical and Computer Engineering, University of Toronto, 10 King's College Road,
Toronto, ON, M5S 3G4, Canada. Email: {\tt jwsimpson@ece.utoronto.ca}. Work supported by NSERC Discovery Grant RGPIN-2017-04008.
}
}


\begin{document}
\maketitle
\thispagestyle{empty}
\pagestyle{empty}


\begin{abstract}
We consider the problem of zeroing an error output of a nonlinear discrete-time system in the presence of constant exogenous disturbances, subject to hard convex constraints on the input signal. The design specification is formulated as a variational inequality, and we adapt a forward-backward splitting algorithm to act as an integral controller which ensures that the input constraints are met at each time step. We establish a low-gain stability result for the closed-loop system when the plant is exponentially stable, generalizing previously known results for integral control of discrete-time systems. Specifically, it is shown that if the composition of the plant equilibrium input-output map and the integral feedback gain is strongly monotone, then the closed-loop system is exponentially stable for all sufficiently small integral gains. {\tb The method is illustrated via application to a four-tank process.}
\end{abstract}


\section{Introduction}\label{Sec:Introduction}

It is a well-known control principle that regulation of an error signal to zero can be achieved robustly in the presence of model uncertainty and constant references/disturbances only through integral feedback control \cite{BAF-WMW:76}. The presence of control input constraints however presents challenges to traditional integral controller designs; sufficient actuator authority may not be available to achieve exact regulation for all references/disturbances, and dynamic performance is sometimes degraded through the so-called wind-up phenomenon \cite{KJA-LR:89}.


There are two broad approaches for accomodating limited actuator authority. The explicit approach is to directly include input constraints into the design, as done in receding-horizon control \cite{JBR-DQM:09}, bounded integral control \cite{GCK-QCZ-BR-MK:16}, and in other nonlinear/adaptive approaches \cite{DEM-EJD:93}. {\tb A more traditional approach is to first design ignoring the actuator limits, and subsequently augment or retro-fit the design in order to improve performance in the presence of saturation; this category would include both classic and modern anti-windup design \cite{ST-MT:09,RSC-JVF-ATS-JMGDSJ:21}, and reference modification \cite{IK-EG-SDC:14,PS-SAB-CD-SDC:19}.}




Returning now to the fundamentals of integral control, a commonly encountered case in practice is that the system one wishes to control is complex, and limited dynamic model information is available, but it is however known that the system is stable (possibly achieved via a stabilizing controller design). A general and well-established design philosophy is that asymptotic tracking and disturbance rejection can be guaranteed by adding a supplementary integral control loop, and that the closed-loop stability will be guaranteed if the integral gain is sufficiently low; a famous and widely-deployed example of this design philosophy is the tuning of automatic generation control in power systems \cite{JWSP-NM:20m}. 

For finite-dimensional multi-input multi-output (MIMO) linear time-invariant (LTI) systems, the fundamental stability result for this approach is due to Davison \cite[Lemma 3]{EJD:76}; see also \cite[Theorem 3]{MM:85}. While \cite{EJD:76} is in continuous-time, the key result is identical in the discrete-time case \cite{AP-HNK:83}. Consider the plant model
\begin{equation}\label{Eq:LTI}
\begin{aligned}
x_{k+1} &= Ax_{k} + Bu_{k} + B_ww\\
e_{k} &= Cx_{k} + Du_{k} + D_ww
\end{aligned}
\end{equation}
with state $x \in \real^n$, control input $u \in \real^m$, constant disturbance/reference signal $w \in \real^{n_w}$, and error output $e \in \real^p$; we associate a sampling period $T_{\rm s} > 0$ with \eqref{Eq:LTI}. Assume that $A$ is Schur stable, and let $G(z) = C(zI_n-A)^{-1}B+D$ denote the transfer matrix of \eqref{Eq:LTI} from $u$ to $e$. One interconnects the system \eqref{Eq:LTI} with the integral controller
\begin{equation}\label{Eq:LTIIntegral}
\begin{aligned}
\eta_{k+1} &= \eta_{k} - \tfrac{T_{\rm s}}{T_{\rm i}} e_{k}, \qquad u_{k} = K\eta_{k},
\end{aligned}
\end{equation}
where $K \in \real^{m \times p}$ is a gain matrix and $T_{\rm i} > 0$ is the integral time constant. Davison's low-gain stability result states that if $-G(1)K$ is Hurwitz stable, then there exists $T_{\rm i}^{\star} > 0$ such that the closed-loop system is exponentially stable for all $T_{\rm i} \in (T_{\rm i}^{\star},\infty)$. {\tb A substantial literature exists on extensions of this core result to infinite-dimensional linear systems; see \cite{MEG-CG-HL:21} and the references therein.} Initial extensions to the continuous-time nonlinear case were given in \cite{CD-CL:85}. In \cite{JWSP:20a} the author further generalized these conditions via contraction theory, and provided a LMI-based procedure to design low-gain integral controllers for continuous-time nonlinear systems.



\smallskip

\emph{Contributions:} In this paper we further contribute to the study of constrained and low-gain integral control. {\tb We begin by proposing that the error-zeroing criteria in the presence of arbitrary convex input constraints be formulated as a variational inequality \cite{FF-JSP:03}}. This leads us to adopt a version of the \emph{projection} or \emph{forward-backward algorithm} \cite{FF-JSP:03b} as a {\tb novel} constrained integral controller. The design is in discrete-time, and is therefore immediately appropriate for digital control implementations. While this design explicitly enforces input constraints at each time instant, it has the following commonality with more traditional anti-windup approaches: if the input constraints are not encountered during operation, the scheme reduces to the classical integral controller \eqref{Eq:LTIIntegral}. Our main stability result (Theorem \ref{Thm:DPIC}) establishes that the ``low-gain integral control stability principle'' described above also holds for this projected integral controller, which extends the main result of \cite{JWSP:20a} to {\tb discrete-time nonlinear systems with input constraints and with our proposed constrained integral controller. To our knowledge, this constitutes the first comprehensive analysis of low-gain integral control for discrete-time nonlinear systems.  We illustrate the design by applying it to the nonlinear quadruple-tank model of \cite{KHJ:00}.}

\smallskip

\emph{Notation:} Given two vectors $x$ and $y$, $\mathrm{col}(x,y)$ denotes their vertical concatenation. If $P$ is a symmetric matrix $\lambda_{\rm min}(P)$ and $\lambda_{\rm max}(P)$ denote its minimum and maximum eigenvalues. A function $\map{f}{X}{\real^n}$ is Lipschitz continuous on $X \subseteq \real^n$ if there exists $L > 0$ such that $\|f(x)-f(y)\| \leq L \|x-y\|$ for all $x,y \in X$.


\section{Problem Formulation}
\label{Sec:Problem}

\subsection{Plant Model and Assumptions}

We consider a plant described by a finite-dimensional nonlinear time-invariant state-space model
\begin{equation}\label{Eq:Plant}
\begin{aligned}
x_{k+1} = f(x_k,u_k,w), \qquad e_k = h(x_k,u_k,w),
\end{aligned}
\end{equation}
where $x_k \in \real^n$ is the state, $u_k \in \real^m$ is the control input, and $w \in \real^{n_w}$ is a {\tb constant} exogenous signal (reference signals and/or disturbances). The signal $e_k \in \real^{p}$ with $p \leq m$ is an error output to be driven to zero. As the model \eqref{Eq:Plant} would most commonly arise via discretization of a continuous-time model, we associate a sampling period $T_{\rm s} > 0$ to \eqref{Eq:Plant}.

For any fixed $w$, the possible equilibrium state-input-error triplets $(\bar{x},\bar{u},\bar{e})$ are determined by the algebraic equations
\[
\begin{aligned}
\bar{x} &= f(\bar{x},\bar{u},w), \qquad \bar{e} = h(\bar{x},\bar{u},w).
\end{aligned}
\]
To capture the steady-state and dynamic behaviour of \eqref{Eq:Plant}, we assume that there exist convex sets $\mathcal{X},\mathcal{U},\mathcal{W}$ such that
\begin{enumerate}[label=(A\arabic*)]
\item \label{Ass:Plant-0} $f$ and $h$ are continuous in all arguments on $\mathcal{X} \times \mathcal{U} \times \mathcal{W}$, $f$ is continuously differentiable with respect to $x$ and $u$, and $f$, $h$, $\tfrac{\partial f}{\partial x}$, and $\tfrac{\partial f}{\partial u}$ are all Lipschitz continuous on $\mathcal{X} \times \mathcal{U}$ uniformly in $w \in \mathcal{W}$;
\item \label{Ass:Plant-1} 
 there is a class $C^1$ map $\map{\pi_{x}}{\mathcal{U}\times\mathcal{W}}{\mathcal{X}}$ which is Lipschitz continuous on $\mathcal{U} \times \mathcal{W}$ such that
\[
\pi_{x}(u,w) = f(\pi_{x}(u,w),u,w), \qquad (u,w) \in \mathcal{U} \times \mathcal{W}.
\]
\item \label{Ass:Plant-3} the equilibrium $\bar{x} = \pi_x(u,w)$ is exponentially stable, uniformly in $(u,w) \in \mathcal{U} \times \mathcal{W}$.
\end{enumerate}

\smallskip

Assumptions \ref{Ass:Plant-0}--\ref{Ass:Plant-3} capture the idea that the plant model is sufficiently smooth, and converges exponentially to a locally unique equilibrium when subject to reasonable constant inputs $u$ and $w$. {\tb The stability property may be inherent to the system, or may have been achieved through an initial stabilizing feedback design. While \ref{Ass:Plant-0}--\ref{Ass:Plant-3} cannot be expected to generically hold for nonlinear systems, many practical systems of interest \textemdash{} such as chemical process dynamics and electric power systems \textemdash{} are internally stable and/or are stabilized via inner-loop controller designs, and can hence be expected to satisfy \ref{Ass:Plant-0}--\ref{Ass:Plant-3}. We call}
\begin{equation}\label{Eq:DefofPi}
\map{\pi}{\mathcal{U}\times\mathcal{W}}{\real^m}, \quad \pi(\bar{u},w) \define h(\pi_{x}(\bar{u},w),\bar{u},w)
\end{equation}
the \emph{equilibrium input-to-error map}, which produces the equilibrium error $\bar{e} = \pi(\bar{u},w)$ associated with the constant control input $\bar{u}$ and disturbance $w$. When applied to the LTI system \eqref{Eq:LTI}, \ref{Ass:Plant-0}--\ref{Ass:Plant-3} simply reduce to $A$ being Schur stable, and the mapping $\pi$ becomes
\begin{equation}\label{Eq:LTIPi}
\pi(\bar{u},w) = G(1)\bar{u} + G_{w}(1)w,
\end{equation}
where $G_{w}(z) = C(zI_n-A)^{-1}B_w + D_w$.

\subsection{Constrained Error-Zeroing Specification}
\label{Sec:Error}


Let $\mathcal{C} \subseteq \mathcal{U}$ be a closed non-empty convex set which describes actuator limits. Following \eqref{Eq:LTIIntegral}, the control signal $u_k$ from our new integral controller will be generated as
\begin{equation}\label{Eq:LinearFeedback}
u_k = K\eta_k
\end{equation}
where $K \in \real^{m \times p}$ is a gain matrix to be designed and $\eta_k \in \real^p$ is the controller state.  It follows that the preimage
\[
\Gamma \define \setdef{\eta \in \real^p}{K\eta \in \mathcal{C}}
\]
is also a closed and non-empty convex set. For example, in applications $\mathcal{C}$ is often polyhedral, in which case so is $\Gamma$.

Our ideal design objective would be to ensure that for any disturbance $w \in \mathcal{W}$, the error signal $e_k$ is asymptotically driven to zero, and that the input constraint $u_k \in \mathcal{C}$ is satisfied at all times. As one might expect however, input constraints may prevent us from exactly zeroing the steady-state error $\bar{e} = \pi(\bar{u},w)$ for at least some disturbances $w \in \mathcal{W}$. We therefore relax the design objective, and instead seek an equilibrium value $\bar{\eta} \in \Gamma$ for the controller state such that
\begin{equation}\label{Eq:VI}
\inner{\bar{e}}{\eta-\bar{\eta}}{P} = \inner{\pi(K\bar{\eta},w)}{\eta-\bar{\eta}}{P} \geq 0, \qquad \forall \eta \in \Gamma,
\end{equation}
where $\inner{x}{y}{P} = x^{\T}Py$ is the inner product on $\real^p$ induced by some positive definite matrix $P \succ 0$. The inequality \eqref{Eq:VI} is called a \emph{variational inequality} \cite{FF-JSP:03}, and we notate a solution of the inequality as $\bar{\eta} \in \mathsf{VI}_{P}(\Gamma,\pi \circ K)$. Note that if $\bar{\eta}$ lies in the interior of the set $\Gamma$, then there exists $\tau > 0$ such that $\eta = \bar{\eta} - \tau \bar{e} \in \Gamma$. The inequality \eqref{Eq:VI} then implies that $-\bar{e}^{\T}\bar{e} \geq 0$, implying that $\bar{e} = 0$. In other words, if the input constraints are \emph{strictly feasible}, then \eqref{Eq:VI} is an exact error-zeroing design specification. A geometric interpretation of \eqref{Eq:VI} uses the \emph{normal cone} of the set $\Gamma$ at $\bar{\eta} \in \Gamma$, defined as
\[
\mathcal{N}^{P}_{\Gamma}(\bar{\eta}) \define \setdef{d \in \real^{p}}{\inner{d}{\eta-\bar{\eta}}{P} \leq 0\,\,\,\text{for all}\,\,\,\eta \in \Gamma}.
\]
With this, \eqref{Eq:VI} can be equivalently expressed as $-\bar{e} = -\pi(K\bar{\eta},w) \in \mathcal{N}^{P}_{\Gamma}(\bar{\eta})$, as illustrated in Figure \ref{Fig:Normal}. The interpretation of Figure \ref{Fig:Normal} is that from the point $\bar{\eta}$, any further attempt to adjust in the direction $-\bar{e} = -\pi(K\bar{\eta},w)$ {\tb would} result in constraint violation.

%

\begin{figure}[t!]
\centering
\includegraphics[width=0.65\columnwidth]{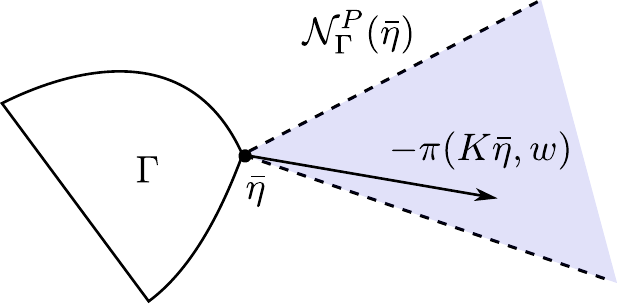}
\caption{Illustration of constrained error-zeroing specification.}
\label{Fig:Normal}
\end{figure}

\medskip

\begin{remark}[\bf Minimization Interpretation]\label{Rem:Min}
To see how else the design specification \eqref{Eq:VI} could arise, suppose that $\epsilon_k = h_{\epsilon}(x_k,u_k,w)$ is a measured tracking error of interest for the system \eqref{Eq:Plant}, with associated equilibrium mapping $\bar{\epsilon} = \pi_{\epsilon}(\bar{u},w)$ defined similar to \eqref{Eq:DefofPi}. Consider the steady-state minimization problem
\begin{equation}\label{Eq:Minimization}
\minimize_{\bar{u} \in \mathcal{C}}\,\, J(\bar{\epsilon}) \quad \Leftrightarrow \quad \minimize_{\bar{\eta} \in \Gamma} \,\,J(\pi_{\epsilon}(K\bar{\eta},w))
\end{equation}
where $\map{J}{\real^p}{\real}$ is a class $C^1$ convex and positive definite function. Critical points of this (generally, non-convex) problem are determined by the inclusion
\begin{equation}\label{Eq:CriticalPoint}
-K^{\sf T}\frac{\partial \pi_{\epsilon}}{\partial \bar{u}}(K\bar{\eta},w)^{\sf T}\nabla J(\pi_{\epsilon}(K\bar{\eta},w)) \in \mathcal{N}_{\Gamma}(\bar{\eta}).
\end{equation}
If we define the error signal $e_k \define K^{\sf T}\frac{\partial \pi_{\epsilon}}{\partial \bar{u}}(u_k,w)^{\T}\nabla J(\epsilon_k)$, then the inclusion \eqref{Eq:CriticalPoint} is precisely the variational inequality \eqref{Eq:VI}. {\tb Thus, the problem of steady-state minimization of a function of a tracking error can be interpreted as a special case of the specification \eqref{Eq:VI}. This perspective connects our approach directly with recent ideas in autonomous and feedback-based optimization; see \cite{LSPL-JWSP-EM:18l, MC-JWSP-AB:19c, AH-SB-GH-FD:21} for recent contributions. We note however that this interpretation via minimization is not necessary; \eqref{Eq:VI} can be directly interpreted as a generalization of a traditional perfect asymptotic tracking specification.}
\hfill \oprocend
\end{remark}
 
\subsection{The Projection (Forward-Backward) Algorithm}

The error-zeroing specification \eqref{Eq:VI} is equivalent to the so-called natural equation \cite[Chp. 1.5]{FF-JSP:03}
\begin{equation}\label{Eq:ProjectedEquation}
\bar{\eta} = \Proj_{\Gamma}^{P}(\bar{\eta} - \alpha \pi(K\bar{\eta},w))
\end{equation}
for any $\alpha > 0$, where $\map{\Proj_{\Gamma}^{P}}{\real^m}{\Gamma}$ defined as
\begin{equation}\label{Eq:Projection}
\Proj_{\Gamma}^{P}(\eta) = \argmin_{\nu \in \Gamma}\,\,\|\eta-\nu\|_{P}
\end{equation}
is the \emph{projection operator}, which yields the closest point to $\eta$ in $\Gamma$ measured in the norm $\|x\|_{P} = \sqrt{x^{\T}Px}$ induced by $P \succ 0$.
%
%
%
The equation \eqref{Eq:ProjectedEquation} leads immediately to the classic \emph{projection} or \emph{forward-backward splitting} algorithm \cite[Section 25.3]{HB-PC:11}.
\begin{equation}\label{Eq:FB}
\eta_{k+1} = (1-\lambda)\eta_k + \lambda\Proj_{\Gamma}^{P}(\eta_k-\alpha\pi(K\eta_k,w))
\end{equation}
for solving the variational inequality $\mathsf{VI}_{P}(\Gamma,\pi \circ K)$, where $\lambda \in (0,1)$ is a damping parameter. We summarize some well-known conditions which ensure exponential stability of the iteration \eqref{Eq:FB} to a unique equilibrium satisfying \eqref{Eq:ProjectedEquation}.

\smallskip

\begin{proposition}\label{Prop:FB}{(\bf Equilibrium and Contraction Properties of Forward-Backward Algorithm)}
Let $w \in \mathcal{W}$. If $\eta \mapsto \pi(K\eta,w)$ is $\mu$-strongly monotone on $\Gamma$ with respect to $\inner{\cdot}{\cdot}{P}$, i.e., if
\[
\inner{\pi(K\eta,w)-\pi(K\eta^{\prime},w)}{\eta-\eta^{\prime}}{P} \geq \mu \|\eta-\eta^{\prime}\|_{P}^2
\]
for some $\mu > 0$ and all $\eta,\eta^{\prime} \in \Gamma$, then \eqref{Eq:FB} possesses a unique equilibrium point $\bar{\eta} \in \Gamma$ satisfying \eqref{Eq:ProjectedEquation}. If $\eta \mapsto \pi(K\eta,w)$ is additionally $L$-Lipschitz continuous on $\Gamma$ with respect to the norm $\|\cdot\|_{P}$, and $\alpha$ is selected such that $\alpha \in (0,2\mu/L^2)$, then the following statements hold:
\begin{enumerate}
\item \label{Prop:FB-1} the foward-backward operator 
\begin{equation}\label{Eq:FBOperator}
\map{\Phi}{\Gamma}{\Gamma}, \qquad \Phi(\eta) = \Proj_{\Gamma}^{P}(\eta-\alpha\pi(K\eta,w))
\end{equation}
is a contraction mapping on $\Gamma$, satisfying
\[
\|\Phi(\eta)-\Phi(\eta^{\prime})\|_{P} \leq c_{\rm fb}\|\eta-\eta^{\prime}\|_{P},\quad \eta,\eta^{\prime} \in \Gamma,
\]
where $c_{\rm fb} = \sqrt{1-2\alpha \mu + \alpha^2 L^2} \in [0,1)$.
\item \label{Prop:FB-2} the damped forward-backward operator
\[
\map{\Phi_{\rm d}}{\Gamma}{\Gamma}, \qquad \Phi_{\rm d}(\eta) = (1-\lambda)\eta + \lambda \Phi(\eta)
\]
is a contraction mapping on $\Gamma$, satisfying
\[
\|\Phi_{\rm d}(\eta)-\Phi_{\rm d}(\eta^{\prime})\|_{P} \leq c_{\rm dfb}\|\eta-\eta^{\prime}\|_{P}.
\]
for all $\eta,\eta^{\prime} \in \Gamma$, where $c_{\rm dfb} = 1-\lambda(1-c_{\rm fb}) \in (0,1)$.
\end{enumerate}
\end{proposition}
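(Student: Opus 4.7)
The plan is to treat this as a fairly standard exercise in the theory of projected iterations for variational inequalities, relying on two well-known building blocks: the nonexpansiveness of the projection operator in the norm that defines it, and the contractivity of the forward step $I - \alpha F$ when $F$ is strongly monotone and Lipschitz.

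First I would record the nonexpansiveness of $\Proj_{\Gamma}^{P}$ with respect to $\|\cdot\|_{P}$. This follows from the variational characterization $\inner{\nu - \Proj_{\Gamma}^{P}(\eta)}{\eta - \Proj_{\Gamma}^{P}(\eta)}{P} \leq 0$ for all $\nu \in \Gamma$: writing this at both $\eta$ and $\eta'$, adding, and applying Cauchy--Schwarz yields $\|\Proj_{\Gamma}^{P}(\eta) - \Proj_{\Gamma}^{P}(\eta')\|_{P} \leq \|\eta - \eta'\|_{P}$. For existence and uniqueness of $\bar{\eta} \in \Gamma$ satisfying \eqref{Eq:ProjectedEquation} under strong monotonicity alone, I would invoke the standard VI theorem for continuous strongly monotone mappings on closed convex sets (Facchinei--Pang, Theorem 2.3.3 of \cite{FF-JSP:03}), with continuity of $\eta \mapsto \pi(K\eta,w)$ supplied by \ref{Ass:Plant-0} via the definition \eqref{Eq:DefofPi}.

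For part \ref{Prop:FB-1}, I would set $F(\eta) \define \pi(K\eta,w)$ and combine the two building blocks above:
\[
\|\Phi(\eta) - \Phi(\eta')\|_{P} \leq \|(\eta - \eta') - \alpha (F(\eta) - F(\eta'))\|_{P}.
\]
Expanding the squared norm on the right gives
\[
\|\eta - \eta'\|_{P}^{2} - 2\alpha \inner{F(\eta) - F(\eta')}{\eta - \eta'}{P} + \alpha^{2} \|F(\eta) - F(\eta')\|_{P}^{2},
\]
which by strong monotonicity and Lipschitz continuity is bounded above by $(1 - 2\alpha\mu + \alpha^{2}L^{2}) \|\eta - \eta'\|_{P}^{2}$. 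The stated range $\alpha \in (0, 2\mu/L^{2})$ is precisely what forces $c_{\rm fb}^{2} \in [0,1)$; the Banach fixed point theorem then independently confirms the existence/uniqueness claim of the first part.

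Part \ref{Prop:FB-2} follows by a one-line triangle inequality on $\Phi_{\rm d} = (1-\lambda)I + \lambda \Phi$: using $\|\Phi(\eta) - \Phi(\eta')\|_{P} \leq c_{\rm fb} \|\eta - \eta'\|_{P}$ from part \ref{Prop:FB-1} together with the nonexpansiveness of the identity yields $\|\Phi_{\rm d}(\eta) - \Phi_{\rm d}(\eta')\|_{P} \leq (1 - \lambda(1 - c_{\rm fb}))\|\eta - \eta'\|_{P}$, and $\lambda \in (0,1)$, $c_{\rm fb} \in [0,1)$ ensure $c_{\rm dfb} \in (0,1)$. There is no real obstacle in any of this; the only thing to be vigilant about is consistently carrying out every norm, projection, and monotonicity estimate with respect to $\inner{\cdot}{\cdot}{P}$, since mixing $P$-weighted and Euclidean structures would cost the projection its nonexpansiveness and break the contraction estimate.
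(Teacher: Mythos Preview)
Your proposal is correct and follows essentially the same route as the paper: the paper cites \cite[Theorem 2.3.3]{FF-JSP:03} for existence/uniqueness and \cite[Theorem 12.1.2]{FF-JSP:03b} (with the $P$-inner product in place of the Euclidean one) for the contraction estimate in (i), which is exactly the nonexpansive-projection-plus-expand-the-square argument you wrote out, and it obtains (ii) immediately from (i) via the same convex-combination bound. Your explicit remark that continuity of $\eta \mapsto \pi(K\eta,w)$ is needed for the existence statement and is supplied by \ref{Ass:Plant-0} is a point the paper leaves implicit in its citation.
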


\begin{proof}
{\tb The existence/uniqueness statement is \cite[Theorem 2.3.3]{FF-JSP:03}. The proof of (i) requires only minor modifications of the proof of \cite[Theorem 12.1.2]{FF-JSP:03b}, in which one uses the inner product $\inner{\cdot}{\cdot}{P}$ in place of $\inner{\cdot}{\cdot}{2}$ with step size $D = \tfrac{1}{\alpha}I$. The proof of (ii) then follows immediately from (i).}
\end{proof}

\section{Damped Projected Integral Control and Low-Gain Stability Result}
\label{Sec:Main}

\subsection{Damped Projected Integral Control}
\label{Sec:DPIC}

We propose adapting the forward-backward splitting algorithm \eqref{Eq:FB} as an integral feedback controller for enforcing the error-zeroing specification \eqref{Eq:VI}. Specifically, we propose the \emph{damped projected integral (DP-I) controller}
\begin{subequations}\label{Eq:DPIC}
\begin{align}
\eta_{k+1} &= (1-\lambda)\eta_{k} + \lambda \Proj^{P}_{\Gamma}(\eta_{k} - \tfrac{T_{\rm s}}{T_{\rm i}} e_k)\\
u_{k} &= K\eta_{k}
\end{align}
\end{subequations}
where $T_{\rm i} > 0$ is the integral time constant. We make several observations regarding \eqref{Eq:DPIC}:

\begin{enumerate}\itemsep=2pt
\item \emph{Constrained Error-Zeroing:} If \eqref{Eq:DPIC} is in equilibrium with the plant \eqref{Eq:Plant}, then it is immediate from \eqref{Eq:ProjectedEquation} that $\bar{\eta} \in \mathsf{VI}_{P}(\Gamma,\pi \circ K)$, {\tb which is precisely the constrained error-zeroing specification \eqref{Eq:VI}.}
\item \emph{Input Constraint Satisfaction \& Windup:} If $\eta_{k} \in \Gamma$, then $\eta_{k+1} \in \Gamma$, since by \eqref{Eq:DPIC} $\eta_{k+1}$ is a convex combination of two points in $\Gamma$. Therefore, $u_k = K\eta_k \in \mathcal{C}$ at all points in time. As a result, \eqref{Eq:DPIC} will never suffer from traditional integrator windup, {\tb as the controller output and plant output will always be in agreement. Note however that, rather than $\eta$ being conditionally frozen, the controller state $\eta_k$ may still change, moving along the boundary of the set $\Gamma$; see \cite{JT-JPH:09,AH-FD-AT:20,PS-SAB-CD-SDC:19} for related ideas in a continuous-time anti-windup design context.}

\item \emph{Reduction to Classical Integral Control:} If $\eta_k \in \Gamma$ and $\eta_{k} - \tfrac{T_{\rm s}}{T_{\rm i}} e_k \in \Gamma$, then the update \eqref{Eq:DPIC} reduces to 
\begin{equation}\label{Eq:LinearIntegralControl}
\eta_{k+1} = \eta_{k} - \tfrac{T_{\rm s}}{T_{\rm i}^{\prime}} e_{k}, \qquad u_k = K\eta_k
\end{equation}
where $T_{\rm i}^{\prime} = \lambda/T_{\rm i}$. Thus, when constraints are not encountered, \eqref{Eq:DPIC} reduces to the integral controller \eqref{Eq:LTIIntegral}.
\item \emph{Computation of Projection:} The projection in \eqref{Eq:DPIC} requires the solution of the convex optimization problem \eqref{Eq:Projection}, but need only be computed at step $k$ if $\eta_k - \frac{T_{\rm s}}{T_{\rm i}}e_k \notin \Gamma$. {\tb Projections onto many types of constraint sets are computable in closed-form  \cite[App. B]{AB:17}.}
\item \emph{Alternative Controller:} The controller \eqref{Eq:DPIC} is based on the natural equation associated with the inequality \eqref{Eq:VI}. If one instead uses a \emph{skewed} natural equation (see \cite[Chp. 1.5]{FF-JSP:03}), one can arrive at the alternative update law
\[
\eta_{k+1} = (1-\lambda)\eta_{k} + \lambda \Proj^{I_r}_{\Gamma}(\eta_{k} - \tfrac{T_{\rm s}}{T_{\rm i}} P^{-1} e_k),
\]
where the projection is now with respect to the standard Euclidean norm. In what follows though, we proceed with the formulation \eqref{Eq:DPIC}, mostly due to point (iii) above.
\end{enumerate}

%
%

\subsection{Low-Gain Stability with DP-I Control}

{\tb The closed-loop system is the interconnection of the plant \eqref{Eq:Plant} and the controller \eqref{Eq:DPIC}; we can now state our main result. }

\smallskip

\begin{theorem}[\bf Low-Gain Stability with DP-I Control]\label{Thm:DPIC}
Consider the plant \eqref{Eq:Plant} under Assumptions \ref{Ass:Plant-0}--\ref{Ass:Plant-3} with the DP-I controller \eqref{Eq:DPIC}. Suppose that there exists a matrix $P \succ 0$ and constants $\mu, L > 0$ such that $\eta \mapsto \pi(K\eta,w)$ is $\mu$-strongly monotone and $L$-Lipschitz continuous on $\Gamma$ with respect to $\inner{\cdot}{\cdot}{P}$, uniformly in $w \in \mathcal{W}$. Define $T_{\rm i}^{\star} \define T_{\rm s}L^2/2\mu$. Then for any $T_{\rm i} \in (T_{\rm i}^{\star},\infty)$, there exists $\lambda^{\star} \in (0,1)$ such that for any $\lambda \in (0,\lambda^{\star})$ and any $w \in \mathcal{W}$, the closed-loop system possesses an exponentially stable equilibrium point $(\bar{x},\bar{\eta}) \in \mathcal{X} \times \Gamma$ and the pair $(\bar{e},\bar{\eta}) = (\pi(K\bar{\eta},w),\bar{\eta})$ satisfies the error-zeroing specification \eqref{Eq:VI}.
\end{theorem}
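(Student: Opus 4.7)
The plan is a two-time-scale (singular-perturbation) Lyapunov argument in which $\lambda$ plays the role of the small time-scale separation parameter. The plant acts as the fast subsystem, converging to $\pi_x(K\eta_k, w)$ with $\eta_k$ treated as frozen (by \ref{Ass:Plant-3}), while the controller evolves on an $O(\lambda)$ slow time scale and, in the singular limit $\lambda \to 0^+$, reduces to the damped forward-backward iteration of Proposition~\ref{Prop:FB}\,(ii). First I would establish existence and uniqueness of the closed-loop equilibrium $(\bar x, \bar\eta)$: any equilibrium forces $\bar x = \pi_x(K\bar\eta, w)$ via \ref{Ass:Plant-1}, hence $\bar e = \pi(K\bar\eta, w)$, and $\bar\eta$ must be a fixed point of $\Phi_{\rm d}$. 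Since $T_{\rm i} > T_{\rm i}^\star$ is exactly the condition $\alpha := T_{\rm s}/T_{\rm i} \in (0, 2\mu/L^2)$, Proposition~\ref{Prop:FB} delivers a unique $\bar\eta \in \Gamma$, and by construction $(\bar e, \bar\eta)$ satisfies \eqref{Eq:VI}.

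For the stability analysis I would invoke a uniform discrete-time converse Lyapunov theorem applied to \ref{Ass:Plant-3} (using \ref{Ass:Plant-0} for the requisite smoothness) to obtain a function $V_p(x; u, w)$ satisfying quadratic sandwich bounds relative to $\pi_x(u, w)$, exponential decrease $V_p(f(x,u,w); u, w) \le \rho V_p(x; u, w)$ with $\rho \in (0, 1)$ uniform on $\mathcal{U} \times \mathcal{W}$, and Lipschitz dependence on $u$. In coordinates $\tilde\eta_k := \eta_k - \bar\eta$ the controller update splits as $\eta_{k+1} = \Phi_{\rm d}(\eta_k) + \lambda\bigl[\Proj_{\Gamma}^{P}(\eta_k - \alpha e_k) - \Proj_{\Gamma}^{P}(\eta_k - \alpha \pi(K\eta_k, w))\bigr]$. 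Non-expansiveness of $\Proj_{\Gamma}^{P}$ in the $P$-norm combined with Lipschitz continuity of $h$ from \ref{Ass:Plant-0} then yields a bound of the form $\|\tilde\eta_{k+1}\|_P \le c_{\rm dfb}\|\tilde\eta_k\|_P + \lambda\alpha L_\delta \|x_k - \pi_x(K\eta_k, w)\|$ for a constant $L_\delta > 0$.

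I would then study the composite Lyapunov function $V_k = V_p(x_k; K\eta_k, w) + \nu \|\tilde\eta_k\|_P^2$ with a weight $\nu > 0$ to be fixed. The increment $V_{k+1} - V_k$ decomposes into the nominal plant decrease (order $-(1-\rho)\|x_k - \pi_x(K\eta_k, w)\|^2$), the nominal controller decrease (order $-2\lambda\nu(1-c_{\rm fb})\|\tilde\eta_k\|_P^2$, up to an $O(\lambda^2\nu)$ correction arising from $c_{\rm dfb}^2 = 1 - 2\lambda(1-c_{\rm fb}) + O(\lambda^2)$), and two cross-terms: one from the Lipschitz-in-$u$ drift of $V_p$ when $\eta$ advances by $O(\lambda)$, and one from the $\delta_k$-perturbation of the controller. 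Absorbing cross-terms with Young's inequality, fixing $\nu$ first in terms of structural constants, and only afterwards shrinking $\lambda$ to tame the residual $O(\lambda^2)$ pieces, one arrives at $V_{k+1} \le (1-\gamma) V_k$ on a sublevel set for some $\gamma > 0$ and all $\lambda \in (0, \lambda^\star)$, which is the claimed exponential stability.

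The main obstacle I anticipate is the asymmetry of decay rates: the plant sector decays at an $O(1)$ rate while the controller sector decays only at $O(\lambda)$, and the Lipschitz-in-$u$ drift of $V_p$ couples $\tilde\eta$ back into the plant sector with an $O(\lambda)$ coefficient that competes directly with the controller decrease. The resolution is to fix $\nu$ in terms of the structural constants $(\rho, \mu, L, c_{\rm fb}, L_\delta, \ldots)$ but independently of $\lambda$, so that the drift of $V_p$ is first dominated within the plant sector, and only then to shrink $\lambda$ to dominate the $O(\lambda^2\nu)$ remainder. Uniformity in $w \in \mathcal{W}$ is inherited throughout, since $\mu$, $L$, $\rho$, and the various Lipschitz constants are all uniform in $w$ under \ref{Ass:Plant-0}--\ref{Ass:Plant-3} and the hypothesis of the theorem.
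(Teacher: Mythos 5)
Your proposal follows essentially the same route as the paper's proof: equilibrium uniqueness via the fixed point of the damped forward--backward operator (with $T_{\rm i}>T_{\rm i}^{\star}$ giving $\alpha\in(0,2\mu/L^2)$), a converse Lyapunov function for the plant in the shifted coordinate $\xi_k=x_k-\pi_x(K\eta_k,w)$ parameterized by $(u,w)$, a contraction estimate for the controller perturbed by the transient error $e_k-\pi(K\eta_k,w)$, and a composite Lyapunov function whose cross terms (including the drift of the plant Lyapunov function as $\eta$ moves by $O(\lambda)$) are dominated by taking $\lambda$ sufficiently small. The only cosmetic difference is that you weight the controller term by $\nu$ and absorb cross terms via Young's inequality, whereas the paper uses an unweighted sum and shows the resulting $2\times 2$ quadratic-form matrix is negative definite for small $\lambda$; both exploit the same $O(1)$ versus $O(\lambda)$ structure of the diagonal decay rates.
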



\smallskip

To interpret the conditions in Theorem \ref{Thm:DPIC}, consider again the LTI case \eqref{Eq:LTIPi}. The condition for strong monotonicity requires that there exist $P \succ 0$ satisfying $(G(1)K)^{\T}P + PG(1)K \succ 0$, which is equivalent to the matrix $-G(1)K$ being Hurwitz stable; this is precisely Davison's classical condition, as described in Section \ref{Sec:Introduction}. The main condition required in Theorem \ref{Thm:DPIC} is that of strong monotonicity of the mapping $\eta \mapsto \pi(K\eta,w)$. {\tb As shown in \cite{JWSP:20a}, the same condition is sufficient for stability of low-gain integral control applied to continuous-time nonlinear systems in the absence of constraints; we further refer the reader to \cite[Sec. IV]{JWSP:20a} for a discussion of how this main condition can be checked computationally via semidefinite programming. }


\smallskip

\begin{pfof}{Theorem \ref{Thm:DPIC}}
The proof is based on a composite Lyapunov construction, and is divided into five steps.

\emph{Step \#1 \textemdash{} Equilibrium and Error Equations:} Let $w \in \mathcal{W}$ and set $\alpha \define T_{\rm s}/T_{\rm i}$. Equilibria $(\bar{x},\bar{\eta})$ are characterized by
\begin{equation}\label{Eq:ClosedLoopEq}
\begin{aligned}
\bar{x} &= f(\bar{x},\bar{u},w), \quad &\bar{\eta} &= \Proj_{\Gamma}^{P}(\bar{\eta}-\alpha \bar{e})\\
\bar{e} &= h(\bar{x},\bar{\eta},w), \quad &\bar{u} &= K\bar{\eta}.
\end{aligned}
\end{equation}
If such an equilibrium exists, then necessarily $\bar{\eta} \in \Gamma$, and hence $\bar{u} = K\bar{\eta} \in \mathcal{C}$. Given any such $\bar{u}$, it follows from \ref{Ass:Plant-1} that the first equation in \eqref{Eq:ClosedLoopEq} can be solved for $\bar{x} = \pi_{x}(\bar{u},w)$; together, \ref{Ass:Plant-1}/\ref{Ass:Plant-3} imply that $\bar{x}$ is isolated. Eliminating $\bar{x}$ and $\bar{e}$, we obtain the reduced equilibrium equation
\begin{equation}\label{Eq:EquilibriumProj}
\bar{\eta} = \Proj_{\Gamma}^{P}(\bar{\eta}-\alpha \pi(K\bar{\eta},w)) = \Phi(\bar{\eta}) = \Phi_{\rm d}(\bar{\eta})
\end{equation}
which is equivalent to the error-zeroing specification \eqref{Eq:VI}. Since $\eta \mapsto \pi(K\eta,w)$ is $\mu$-strongly monotone on $\Gamma$ uniformly in $w$, and $\Gamma$ is closed, convex, and non-empty, $\mathsf{VI}_{P}(\Gamma,\pi \circ K)$ admits a unique solution \cite[Theorem 2.3.3]{FF-JSP:03}. We conclude that the closed-loop system possess a unique equilibrium point $(\bar{x},\bar{\eta}) \in \mathcal{X} \times \Gamma$ with $\bar{e} = h(\bar{x},K\bar{\eta},w) = \pi(K\bar{\eta},w)$ and control $\bar{u} = K\bar{\eta} \in \mathcal{C}$. Consider the change of state variable
\[
\begin{aligned}
\xi_k &\define x_k - \pi_x(K\eta_k,w).
\end{aligned}
\]
With this, the dynamics \eqref{Eq:Plant},\eqref{Eq:DPIC} become
\begin{equation}\label{Eq:NewCoordDynamics}
\begin{aligned}
\xi_{k+1} &= f(\xi_{k} + \pi_{x}(K\eta_k,w), K\eta_k,w) - \pi_x(K\eta_{k+1},w)\\
e_k &= h(\xi_k+\pi_x(K\eta_k,w),K\eta_k,w)\\
\eta_{k+1} &= (1-\lambda)\eta_k + \lambda \Proj_{\Gamma}^{P}(\eta_k - \alpha e_k),
\end{aligned}
\end{equation}
and the equilibrium point of interest is $(\xi,\eta) = (0,\bar{\eta})$. 

\medskip

\emph{Step \#2 \textemdash{} Analyzing the Slow Dynamics:} Let $V_{\rm s}(\eta) = \|\eta-\bar{\eta}\|_{P}^2$. Using $\Phi$ and $\Phi_{\rm d}$ from Proposition \ref{Prop:FB}, we compute that
\[
\begin{aligned}
V_{\rm s}(\eta_{k+1})^{\tfrac{1}{2}} &= \|(1-\lambda)\eta_k + \lambda \Proj_{\Gamma}^{P}(\eta_k - \alpha e_k) - \bar{\eta}\|_{P}\\
&= \|(1-\lambda)\eta_{k} + \lambda \Phi(\eta_k) - \bar{\eta} \\
&\quad \qquad + \lambda (\Proj_{\Gamma}^{P}(\eta_k-\alpha e_k)-\Phi(\eta_k))\|_{P}\\
&= \|\Phi_{\rm d}(\eta_k) - \Phi_{\rm d}(\bar{\eta})\\
&\qquad + \lambda (\Proj_{\Gamma}^{P}(\eta_k-\alpha e_k)-\Phi(\eta_k))\|_{P}\\
&\leq c_{\rm dfb}\|\eta_{k}-\bar{\eta}\|_{P} + \lambda \|\delta\|_{P}
\end{aligned}
\]
where $\delta = \Proj_{\Gamma}^{P}(\eta_k-\alpha e_k)-\Phi(\eta_k)$ {\tb and $c_{\rm dfb}$ is defined in Proposition \ref{Prop:FB}}. To bound $\|\delta\|_{P}$ we compute that
\[
\begin{aligned}
\|\delta\|_{P}^2 &= \|\Proj_{\Gamma}^{P}(\eta_k-\alpha e_k)-\Proj_{\Gamma}^{P}(\eta_k-\alpha\pi(K\eta_k,w))\|_{P}^2\\
&\leq \alpha^2\|e_k-\pi(K\eta_k,w)\|_{P}^2\\
&= \alpha^2 \|h(\xi_k+\pi_x(K\eta_k,w),K\eta_k,w)\\
&\qquad \qquad \qquad \qquad - h(\pi_x(K\eta_k,w),K\eta_k,w)\|_P^2\\
&\leq \alpha^2\lambda_{\rm max}(P) L_h^2 \|\xi_k\|_2^2
\end{aligned}
\]
where $L_h$ is the Lipschitz constant of $h$. Combining the above, with $\Delta V_{\rm s} = V_{\rm s}(\eta_{k+1}) - V_{\rm s}(\eta_k)$, one finds that along trajectories of \eqref{Eq:NewCoordDynamics} it holds that
\[
\begin{aligned}
\Delta V_{\rm s} &\leq (c_{\rm dfb}^2-1) \|\eta_k-\bar{\eta}\|_{P}^2 + \lambda_{\rm max}(P)L_h^2\alpha^2\lambda^2\|\xi_k\|_2^2\\
&\qquad + 2\lambda_{\rm max}(P)^{\tfrac{1}{2}}\alpha L_h\lambda c_{\rm dfb}\|\eta_k-\bar{\eta}\|_{P}\|\xi_k\|_2\\
&= \zeta_k^{\T}Q_{\rm s}\zeta_k
\end{aligned}
\]
where $\zeta_k = \mathrm{col}(\|\xi_k\|_2,\|\eta_k-\bar{\eta}\|_{P})$ and
\[
Q_{\rm s} = \begin{bmatrix}
q_1\lambda^2 & q_2\lambda \\
q_2\lambda  & c_{\rm dfb}^2-1
\end{bmatrix}, \quad \begin{aligned}
q_1 = \lambda_{\rm max}(P)\alpha^2 L_h^2\\
q_2 = \alpha \lambda_{\rm max}(P)^{1/2}L_h c_{\rm dfb}.
\end{aligned}
\]

\medskip

\emph{Step \#3 \textemdash{} Bounding $\|\eta_{k+1}-\eta_{k}\|_{P}$:} We compute using the triangle inequality that
\begin{equation}\label{Eq:etaketak}
\begin{aligned}
\|\eta_{k+1}-\eta_{k}\|_{P} &\leq \|\eta_{k+1} - \Phi_{\rm d}(\eta_k)\|_{P}\\
&\qquad  + \|\Phi_{\rm d}(\eta_k) - \eta_k\|_{P}.
\end{aligned}
\end{equation}
Using our previous calculations, the first term in \eqref{Eq:etaketak} can be bounded as
\[
\begin{aligned}
\|\eta_{k+1} - \Phi_{\rm d}(\eta_k)\|_{P} &= \lambda\|\Proj_{\Gamma}^{P}(\eta_k - \alpha e_k) - \Phi(\eta_k)\|_{P}\\
&= \lambda \|\delta\|_{P}\\
&\leq \lambda \alpha L_h \lambda_{\rm max}(P)^{1/2} \|\xi_k\|_2.
\end{aligned}
\]
To bound the second term in \eqref{Eq:etaketak}, it follows from Proposition \ref{Prop:FB} and the triangle inequality that
\begin{equation}\label{Eq:PhiDetaketak}
\begin{aligned}
\|\Phi_{\rm d}(\eta_k)-\eta_k\|_{P} &= \lambda\|\Phi(\eta_k)-\eta_k\|_{P}\\
&= \lambda\|(\eta_k-\bar{\eta}) - (\Phi(\eta_k) - \bar{\eta})\|_{P}\\
&\leq \lambda(1+c_{\rm fb})\|\eta_k-\bar{\eta}\|_{P}.
\end{aligned}
\end{equation}
Putting things together we obtain
\begin{equation}\label{Eq:EtaEtaBound}
\begin{aligned}
\|\eta_{k+1}-\eta_{k}\|_{P} &\leq \lambda L_h \alpha \lambda_{\rm max}(P)^{1/2} \|\xi_k\|_2\\
&\qquad  + \lambda(1+c_{\rm fb})\|\eta_k-\bar{\eta}\|_{P}.
\end{aligned}
\end{equation}

\medskip

\emph{Step \#4 \textemdash{} Analyzing the Fast Dynamics:} Define the deviation vector field $\map{g}{\real^n \times \mathcal{U} \times \mathcal{W}}{\real^n}$ by
\[
\begin{aligned}
g(\xi,u,w) &= f(\xi+\pi_x(u,w),u,w) - f(\pi_x(u,w),u,w)\\
&= f(\xi+\pi_x(u,w),u,w) - \pi_x(u,w).
\end{aligned}
\]
Under Assumptions \ref{Ass:Plant-0}--\ref{Ass:Plant-3}, the conditions of a converse Lyapunov theorem for exponential stability are satisfied (see \cite[Thm A1]{JWSP:21a-extended}): there exists a set $\mathcal{Z}$ containing the origin in its interior, positive constants $c_1, c_2, c_3, c_4 > 0$, $\rho_{\rm f} \in [0,1)$, and a continuous function
\[
\map{V_{\rm f}}{\mathcal{Z} \times \mathcal{U} \times \mathcal{W}}{\real_{\geq 0}}, \quad (\xi,u,w) \mapsto V_{\rm f}(\xi,u,w)
\]
satisfying the Lyapunov conditions
\begin{subequations}\label{Eq:LyapFast}
\begin{align*}
&c_1 \|\xi\|_2^2 \leq V_{\rm f}(\xi,u,w) \leq c_2 \|\xi\|_2^2\\
%
%
&V_{\rm f}(g(\xi,u,w),u,w) - V_{\rm f}(\xi,u,w) \leq -\rho_{\rm f} \|\xi\|_2^2\\
%
%
&|V_{\rm f}(\xi,u,w) - V_{\rm f}(\xi^{\prime},u,w)| \leq c_3 (\|\xi\|_2 + \|\xi^{\prime}\|_2)\|\xi-\xi^{\prime}\|_2\\
%
%
&|V_{\rm f}(\xi,u,w) - V_{\rm f}(\xi,u^{\prime},w)| \leq c_4\|\xi\|_2^2\|u-u^{\prime}\|_2
\end{align*}
\end{subequations}
for all $\xi,\xi^{\prime} \in \mathcal{Z}$, all $u,u^{\prime} \in \mathcal{U}$, and all $w \in \mathcal{W}$. Let 
\[
\begin{aligned}
\Delta V_{\rm f} &= V_{\rm f}(\xi_{k+1},\eta_{k+1},w) - V_{\rm f}(\xi_k,\eta_k,w)\\
\end{aligned}
\]
denote the increment of $V_{\rm f}$ along trajectories of \eqref{Eq:NewCoordDynamics}. 
While we suppress the details due to space limitations\footnote{Available in extended version of this paper  \cite{JWSP:21a-extended}.}, one may use the Lyapunov properties to conclude that there exists $r > 0$ such that $\Delta V_{\rm f} \leq \zeta_k Q_{\rm f}\zeta_k$ holds for all $(\xi_k,\eta_k,w) \in \mathcal{B}_{r}(0) \times \Gamma \times \mathcal{W}$, where 
\[
\begin{aligned}
Q_{\rm f} &= \begin{bmatrix}-\rho_{\rm f} + k_1\lambda^2 + (k_2+k_6)\lambda & k_3\lambda^2 + (k_4+k_7)\lambda\\
k_3\lambda^2 + (k_4+k_7)\lambda & k_5\lambda^2
\end{bmatrix}
\end{aligned}
\]
and where the constants $k_1$ through $k_7$ are positive and independent of $\lambda$. 

\emph{Step \#5 -- Putting the Pieces Together:} Define the composite Lyapunov candidate $V(\xi,\eta,w) = V_{\rm s}(\eta) + V_{\rm f}(\xi,\eta,w)$. Along trajectories of \eqref{Eq:NewCoordDynamics}, we combine the previous inequalities to compute that
\[
\Delta V = V(\xi_{k+1},\eta_{k+1},w) - V(\xi_{k},\eta_{k},w) \leq \zeta_k^{\T}Q\zeta_k
\]
holds for all $(\xi_k,\eta_k,w) \in \mathcal{B}_{r}(0) \times \Gamma \times \mathcal{W}$, where
\[
Q = \begin{bmatrix}-\rho_{\rm f} + (k_1+q_1)\lambda^2 + \tilde{k}_2\lambda & k_3\lambda^2 + \tilde{k}_4\lambda\\
k_3\lambda^2 + \tilde{k}_4\lambda & -(1-c_{\rm dfb}^2) + k_5\lambda^2
\end{bmatrix}
\]
and where for compactness we set $\tilde{k}_2 = k_2 + k_6$ and $\tilde{k}_4 = k_4 + k_7 + q_2$. Note that the $(1,1)$ element of $Q$ is negative and $\mathcal{O}(1)$ as $\lambda \to 0$\footnote{For a function $\map{g}{\real}{\real}$ which is positive definite with respect to $0$, a function $\map{f}{\real}{\real}$ is $\mathcal{O}(g(\lambda))$ as $\lambda \to 0$ if $\lim_{\lambda \to 0}|f(\lambda)|/g(\lambda) < \infty$.}. From Proposition \ref{Prop:FB}
\[
1-c_{\rm dfb}^2 = 2\lambda(1-c_{\rm fb}) - \lambda^2(1-c_{\rm fb})^2,
\]
with $c_{\rm fb} \in (0,1)$, and therefore the $(2,2)$ element of $Q$ is negative and $\mathcal{O}(\lambda)$ as $\lambda \to 0$. Since the off diagonal elements are $\mathcal{O}(\lambda)$ as $\lambda \to 0$, it is straightforward to argue that there exists some $\lambda^{\star} > 0$ such that $Q \prec 0$ for all $\lambda \in (0,\lambda^{\star})$. It follows that there exists $\varepsilon > 0$ such that $\Delta V(\xi_k,\eta_k,w) \leq -\varepsilon V(\xi_k,\eta_k,w)$ for all $(\xi_k,\eta_k,w) \in \mathcal{B}_{r}(0) \times \Gamma \times \mathcal{W}$. Standard arguments (e.g., \cite[Thm. 13.2]{WMH-VC:19} now complete the proof.
\end{pfof}


{\tb

\section{Example: Four-Tank Process}
\label{Sec:Simulation}

We illustrate our approach with an application to sampled-data control of nonlinear process describing water flow between four interconnected tanks; see \cite{KHJ:00} for a schematic. With state $h \in \real_{>0}^4$ describing the water levels in the four tanks, and inputs $u \in \real_{\geq 0}^2$ being flow rates for the two pumps, the continuous-time system dynamics can be expressed as
\begin{equation}\label{Eq:Tank}
\dot{h} = \mathcal{A}\phi(h) + \mathcal{B}u,\, \quad y = \mathrm{col}(h_1,h_2)
\end{equation}
where $\phi(h) = \mathrm{col}(\sqrt{2gh_1},\sqrt{2gh_2},\sqrt{2gh_3},\sqrt{2gh_4})$ and
\[
\mathcal{A} = \begin{bmatrix}
-\tfrac{a_1}{A_1} & 0 & \tfrac{a_3}{A_1} & 0\\
0 & -\tfrac{a_2}{A_2} & 0 & \tfrac{a_4}{A_2}\\
0 & 0 & -\tfrac{a_3}{A_3} & 0\\
0 & 0 & 0 & -\tfrac{a_4}{A_4}
\end{bmatrix},\,\, \mathcal{B} = \begin{bmatrix}
\tfrac{\gamma_1}{A_1} & 0\\
0 & \tfrac{\gamma_2}{A_2}\\
0 & \tfrac{1-\gamma_2}{A_3}\\
\tfrac{1-\gamma_1}{A_4} & 0
\end{bmatrix},
\]
with parameters as given in \cite{KHJ:00}. With $u^{\star} = (32.64,32.64)$, the point $h^{\star} = (10,10,5.38,5.38)$ is an exponentially stable equilibrium, which we consider as the nominal operating point for the system. While we omit the details due to space limitations, one can find appropriate sets $\mathcal{X}$ and $\mathcal{U}$ around this operating point such that \ref{Ass:Plant-0}--\ref{Ass:Plant-3} hold for the ideally discretized model associated with \eqref{Eq:Tank}. The control objective is to regulate the water levels $h_1, h_2$ in the two lower tanks to specified set-points $r_1,r_2$, and we therefore take $e = (h_1-r_1,h_2-r_2)$ as the error signal of interest. The control inputs $u_1,u_2$ are constrained to lie in set
\[
\mathcal{C} = \setdef{(u_1,u_2)}{u_1 \in [0,45],\,\,u_2 \in [0,45],\,\, u_1 + u_2 \leq 85}.
\]
modelling individual and total flow rate constraints for the two pumps. Straightforward computations show that \eqref{Eq:DefofPi} is
\[
\pi(\bar{u}) = \frac{1}{2g}\mathrm{diag}(\Pi \bar{u})\Pi \bar{u}, \quad \Pi \define \begin{bmatrix}
\tfrac{\gamma_1}{a_1} & \tfrac{1-\gamma_2}{a_1}\\
\tfrac{1-\gamma_1}{a_2} & \tfrac{\gamma_2}{a_2}
\end{bmatrix}.
\]
Selecting $K = \Pi^{-1}$ ensures that the monotonicity and Lipschitz conditions in Theorem \ref{Thm:DPIC} are satisfied with $P = I_2$ on a large domain containing the nominal operating point. The remaining parameters for the DP-I controller \eqref{Eq:DPIC} are selected as $T_{\rm s} = 10$s, $T_{\rm i} = 15$s, and $\lambda = 0.95$. Figure \ref{Fig:Tank} shows the closed-loop response to sequential set-point changes for the water levels in the first two tanks. During the first two reference changes, the constraints $u \in \mathcal{C}$ are strictly feasible, and hence exact reference tracking is observed in Figure \ref{Fig:Tank1}. After the next two reference changes, the constraints $u_1 \in [0,45]$ and $u_1 + u_2 \leq 85$ are both encountered. Exact tracking is no longer possible, but input constraints are maintained and the closed-loop system remains stable and reaches a steady-state satisfying the variational inequality \eqref{Eq:VI}.

\begin{figure}[ht!]
\centering
\begin{subfigure}{0.99\linewidth}
\includegraphics[width=\linewidth]{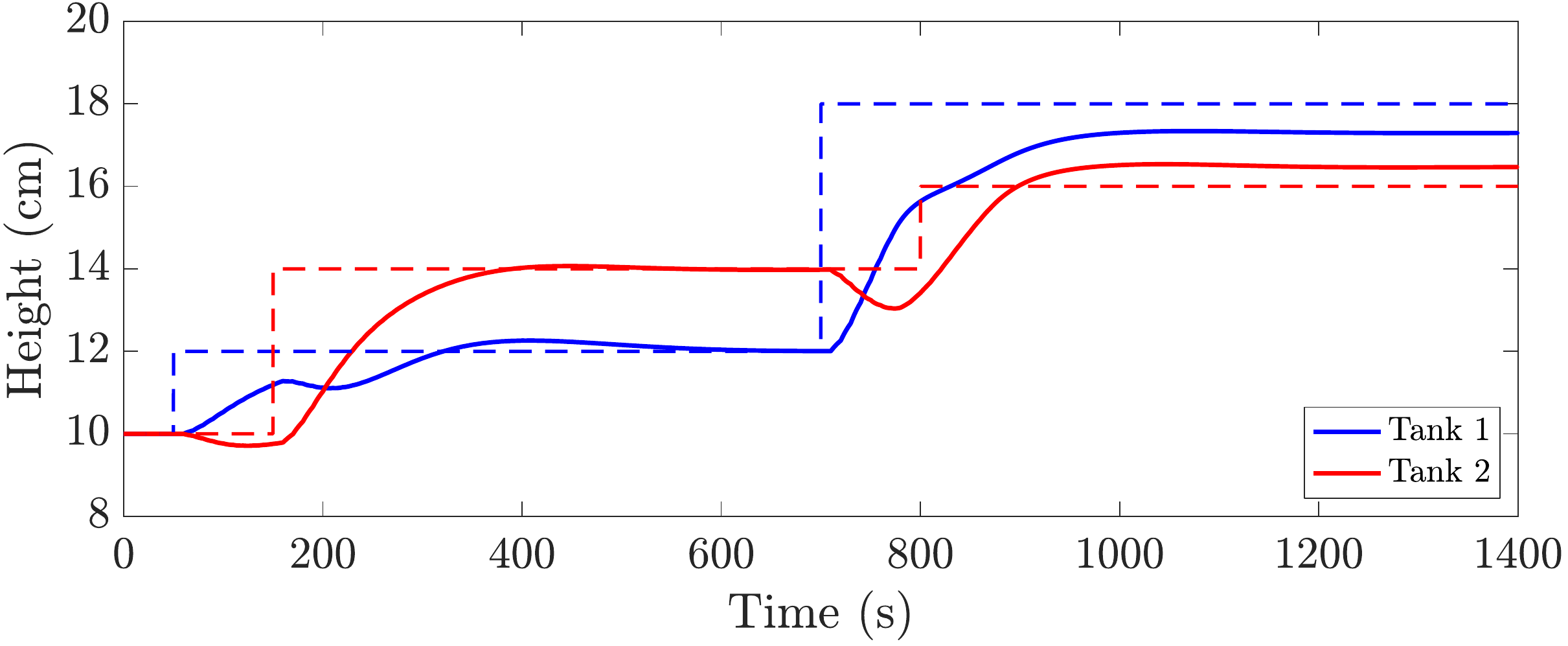}
\caption{Tank water levels; dashed lines denote reference set-points.}
\label{Fig:Tank1}
\end{subfigure}\\
%
%
\smallskip
\begin{subfigure}{0.99\linewidth}
\includegraphics[width=\linewidth]{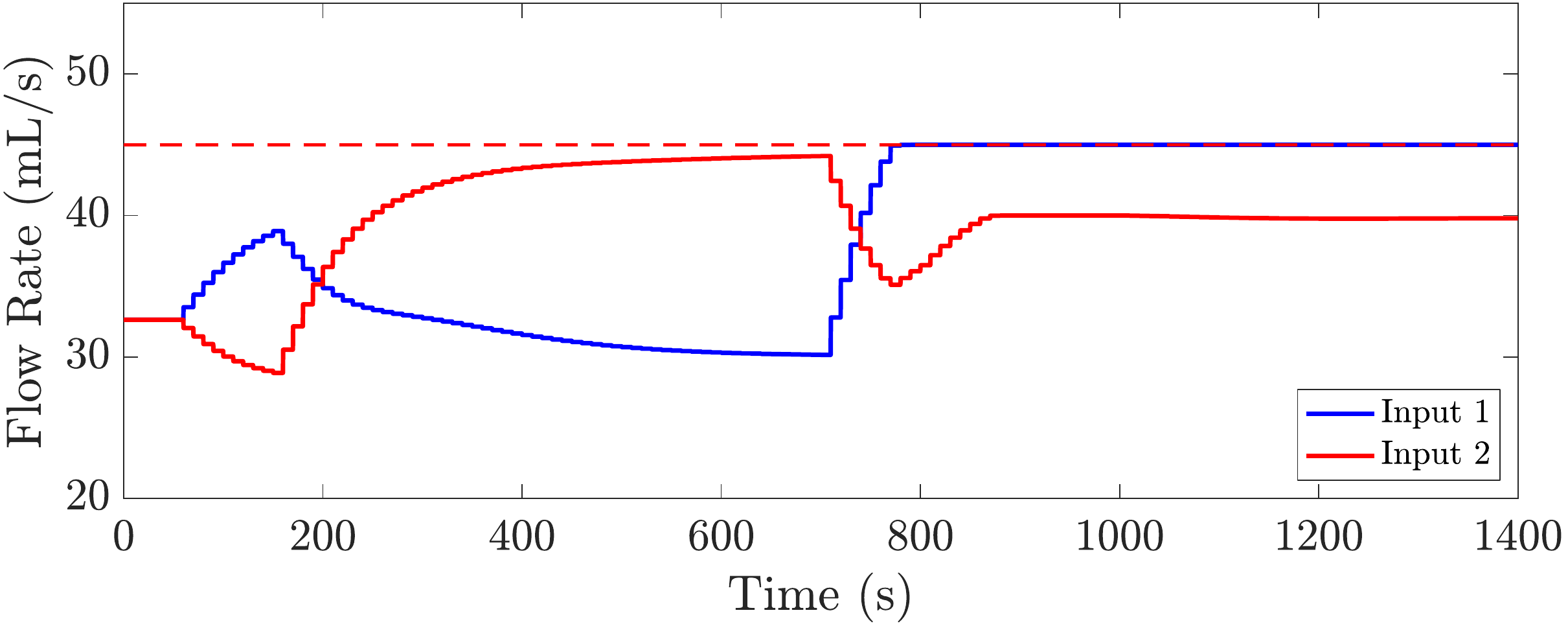}
\caption{Control signals; dashed line denotes individual pump limit.}
\label{Fig:Tank2}
\end{subfigure}
\caption{{\tb Closed-loop response of 4-tank system.}}
\label{Fig:Tank}
\end{figure}

}

\section{Conclusions}
\label{Sec:Conclusion}

We have formulated an approximate tracking specification as a variational inequality, and designed projected integral controller to meet this specification while maintaining arbitrary convex constraints on the input signal at all times. In the absence of constraints, the approximate tracking specification reduces to an exact tracking specification, and the projected integral controller reduces to a classical integral controller. {\tb The controller inherits what is perhaps the most important stability property of traditional integral control, namely that under a monotonicity condition on the plant equilibrium mapping, closed-loop stability can be guaranteed when the plant is exponentially stable and the integral gain is sufficiently low.} Future work will consider the extension of this scheme to a projected PID controller, and the extension to more general discrete-time output-regulating controllers which admit a representation in so-called incremental form.

\renewcommand{\baselinestretch}{1}
\bibliographystyle{IEEEtran}

\bibliography{/Users/jwsimpso/GoogleDrive/JohnSVN/bib/brevalias,%
/Users/jwsimpso/GoogleDrive/JohnSVN/bib/Main,%
/Users/jwsimpso/GoogleDrive/JohnSVN/bib/JWSP,%
/Users/jwsimpso/GoogleDrive/JohnSVN/bib/New%
}


\end{document}